\documentclass{article}
\usepackage{amsmath, amsthm}
\usepackage{amssymb}
\textwidth 12cm \textheight 18.5 cm


\newtheoremstyle{theorem}
  {10pt}          
  {10pt}  
  {\sl}  
  {\parindent}     
  {\bf}  
  {. }    
  { }    
  {}     
\newtheorem{thm}{Theorem}[section]

\newtheorem{prop}[thm]{Proposition}
\newtheorem{defn}{Definition}[section]

\textwidth=13cm \topmargin=-1cm \textheight=21cm

\begin{document}
\title{\large\bf Nabla Euler -Lagrange equations in discrete fractional variational calculus within Riemann and Caputo}
\author{\small \bf Thabet Abdeljawad $^a$  \\  {\footnotesize $^a$ Department of Mathematics and Physical Sciences}\\
{\footnotesize Prince Sultan University, P. O. Box 66833, Riyadh 11586, Saudi Arabia}}
\date{}
\maketitle

{\footnotesize {\noindent\bf Abstract.} Different fractional difference types of Euler-Lagrange equations are obtained within Riemann and Caputo by making use of different versions of integration by part forumlas  in fractional difference calculus. An example is presented to illustrate part of the results.
 \\

{\bf Keywords:} right (left) delta and nabla fractional sums, right (left) delta and nabla  Riemann, dual identity, Euler equation, integration by parts.

\section{Introduction}
Fractional calculus which deals with integration and differentiation of arbitrary orders attracted the attention of many researchers in the last two decades or so for its widespread applications in different fields of mathematics, physics, engineering, economic and biology. For  detailed and sufficient material about this calculus we refer to the books \cite{podlubny, Samko, Kilbas}. However, the discrete fractional calculus which is not as old as fractional calculus,  was initiated lately in eighty's of the last century in \cite{Miller, Gray}. Then in the last few years many authors started to investigate the theory and applications of the discrete fractional calculus \cite{Th Caputo,Ferd, Feri, Nabla, Atmodel, Gronwall, TDbyparts, Gdelta, Gnabla, Gfound}. Very recently, the authors in \cite{THFer, Th DDNS, Th ADE} have discussed different definitions for fractional differences specially in the right case, under which suitable integration by parts formulae have been initiated. Benefitting from those formulae we continue in this work and apply to discrete fractional variational calculus to obtain different results from those obtained in \cite{Nuno, Atmodel}. In the usual fractional variational case we refer to \cite{FTD1, FTD2, FTD3,tr, FTD4}.

The article is organized as follows: In the rest of this section we give basic definitions and preliminary results about nabla fractional sums and differences. In Section 2 we discussed different integration by parts formulae in discrete  fractional calculus. In Section 3 we set some discrete variational problems benefitting from the integration by parts formulae obtained in Section 2. Finally, in Section 4 an example of physical interest is given to illustrate our main results.

For the sake of the nabla fractional calculus we have the following definition

\begin{defn} \label{rising}(\cite{Adv,Boros,Grah,Spanier})

(i) For a natural number $m$, the $m$ rising (ascending) factorial of $t$ is defined by

\begin{equation}\label{rising 1}
    t^{\overline{m}}= \prod_{k=0}^{m-1}(t+k),~~~t^{\overline{0}}=1.
\end{equation}

(ii) For any real number the $\alpha$ rising function is defined by
\begin{equation}\label{alpharising}
 t^{\overline{\alpha}}=\frac{\Gamma(t+\alpha)}{\Gamma(t)},~~~t \in \mathbb{R}-~\{...,-2,-1,0\},~~0^{\mathbb{\alpha}}=0
\end{equation}

\end{defn}

Regarding the rising factorial function we observe for example that

 \begin{equation}\label{oper}
    \nabla (t^{\overline{\alpha}})=\alpha t^{\overline{\alpha-1}}
\end{equation}

\textbf{Notation}:
\begin{enumerate}
\item[$(i)$] For a real $\alpha>0$, we set $n=[\alpha]+1$, where $[\alpha]$ is the greatest integer less than or equal to $\alpha$.

\item[$(ii)$] For real numbers $a$ and $b$, we denote $\mathbb{N}_a=\{a,a+1,...\}$ and $~_{b}\mathbb{N}=\{b,b-1,...\}$.

\item[$(iii)$]For $n \in \mathbb{N}$ and real $a$, we denote
$$ _{\circleddash}\Delta^n f(t)\triangleq (-1)^n\Delta^n f(t),$$ where $\Delta^n f$ is the $n$ iterating of $\Delta f(t)=f(t+1)-f(t)$.
\end{enumerate}

\begin{defn} \label{fractional sums} \cite{Th ADE}
Let $\sigma(t)=t+1$ and $\rho(t)=t-1$ be the forward and backward jumping operators, respectively. Then

(i) The (nabla) left fractional sum of order $\alpha>0$ (starting from $a$) is defined by:
\begin{equation}\label{nlf}
  \nabla_a^{-\alpha} f(t)=\frac{1}{\Gamma(\alpha)} \sum_{s=a+1}^t(t-\rho(s))^{\overline{\alpha-1}}f(s),~~t \in \mathbb{N}_{a+1}.
\end{equation}

(ii)The (nabla) right fractional sum of order $\alpha>0$ (ending at $b$) is defined by:
\begin{equation}\label{nrs}
   ~_{b}\nabla^{-\alpha} f(t)=\frac{1}{\Gamma(\alpha)} \sum_{s=t}^{b-1}(s-\rho(t))^{\overline{\alpha-1}}f(s)=\frac{1}{\Gamma(\alpha)} \sum_{s=t}^{b-1}(\sigma(s)-t)^{\overline{\alpha-1}}f(s),~~t \in ~_{b-1}\mathbb{N}.
\end{equation}
\end{defn}

\begin{defn} \label{fractional differences} \cite{Th ADE}

(i) The (nabla) left fractional difference of order $\alpha>0$ (starting from $a$ ) is defined by:
\begin{equation}\label{nld}
  \nabla_a^{\alpha} f(t)=\nabla^n \nabla_a^{-(n-\alpha)}f(t)= \frac{\nabla^n}{\Gamma(n-\alpha)} \sum_{s=a+1}^t(t-\rho(s))^{\overline{n-\alpha-1}}f(s),~~t \in \mathbb{N}_{a+1}
\end{equation}

(ii) The (nabla) right fractional difference of order $\alpha>0$ (ending at $b$ ) is defined by:
\begin{equation}\label{nrd}
   ~_{b}\nabla^{\alpha} f(t)= ~_{\circleddash}\Delta^n ~_{b}\nabla^{-(n-\alpha)}f(t) =\frac{(-1)^n\Delta^n}{\Gamma(n-\alpha)} \sum_{s=t}^{b-1}(s-\rho(t))^{\overline{n-\alpha-1}}f(s),~~t \in ~ _{b-1}\mathbb{N}
\end{equation}

\end{defn}

\begin{defn} \cite{Th DDNS}
Let $\alpha >0$ be noninteger, $~ n=[\alpha]+1,~a(\alpha)=a+n-1$ and $b(\alpha)=b-n+1$. Then the (dual) nabla left and right Caputo fractional differences are defined by
\begin{equation}\label{Cdual left}
   ~^{C}\nabla_{a(\alpha)}^\alpha f(t)=\nabla_{a(\alpha)}^{-(n-\alpha)} \nabla^n f(t),~~t \in \mathbb{N}_{a+n}
\end{equation}

and
\begin{equation}\label{Cdual right}
  _{b(\alpha)}~ ^{C}\nabla^\alpha f(t)=~_{b(\alpha)}\nabla^{-(n-\alpha)} {\ominus}\Delta^n f(t), ~~t \in ~_{b-n}\mathbb{N},
\end{equation}
respectively.
\end{defn}
Notice that  when $0<\alpha < 1$ we have
$$~^{C}\nabla_{a(\alpha)}^\alpha f(t)=~^{C}\nabla_a^\alpha f(t)~~and ~~ _{b(\alpha)} ^{C}\nabla^\alpha f(t)= ~_{b} ^{C}\nabla^\alpha f(t).$$

It is important to remark that the two quantities $(\nabla_a^{-\alpha}f^\rho)(t)$ and $(\nabla_a^{-\alpha}f)(\rho(t))$ are different, where $\rho(t)=t-1$. In connection, we state the following properties without proofs.

\begin{prop}\label{properties}
Let $\rho(t)=t-1$, $\sigma(t)=t+1$ ,  $\alpha >0$ and $f$ be function defined on $\mathbb{N}_a \cap ~_{b}\mathbb{N}$ where $a\equiv b ~(mod ~1)$. Then
\begin{itemize}
  \item 1) $(\nabla_a^ {-\alpha}f^\rho)(t)=(\nabla_{a-1}^ {-\alpha}f)(\rho(t))$.
  \item 2) $(\nabla_a^ {\alpha}f^\rho)(t)=(\nabla_{a-1}^ {\alpha}f)(\rho(t))$.
  \item 3) $(~^{C}\nabla_a^\alpha f^\rho)(t)=(~^{C}\nabla_{a-1}^\alpha f)(\rho(t))$.
  \item 4)$(~_{b}\nabla^{-\alpha}f^\sigma)(t)=(~_{b+1}\nabla^{-\alpha}f)(\sigma(t))$.
  \item 5)$(~_{b}\nabla^{\alpha}f^\sigma)(t)=(~_{b+1}\nabla^{\alpha}f)(\sigma(t))$.
  \item 6)$(~^{C}_{b}\nabla^\alpha f^\sigma)(t)=(~^{C}_{b+1}\nabla^\alpha f)(\sigma(t))$.
   \end{itemize}
\end{prop}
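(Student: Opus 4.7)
The plan is to establish identity~(1) by a direct reindexing of the defining sum, and then cascade from it: identities (2) and (3) follow from (1) by invoking the elementary commutation rule that $\nabla$ commutes with right-composition by $\rho$; identities (4), (5), (6) are proved in the mirror-symmetric fashion with the right fractional sum in place of the left and $\sigma$ in place of $\rho$.

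For (1), I would expand
\begin{equation*}
(\nabla_a^{-\alpha} f^\rho)(t) \;=\; \frac{1}{\Gamma(\alpha)} \sum_{s=a+1}^{t} (t-\rho(s))^{\overline{\alpha-1}}\, f(s-1)
\end{equation*}
and substitute $u=s-1$. The new range is $u=a,\dots,t-1$, and $t-\rho(s)=t-u$, so the sum becomes $\frac{1}{\Gamma(\alpha)}\sum_{u=a}^{t-1}(t-u)^{\overline{\alpha-1}}f(u)$. Writing out $(\nabla_{a-1}^{-\alpha}f)(\rho(t))$ directly from the definition and using the identity $(t-1)-\rho(s)=t-s$ produces exactly the same expression, proving (1). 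Identity (4) is obtained symmetrically: starting from the $(\sigma(s)-t)$ form of the right fractional sum and substituting $u=\sigma(s)=s+1$ converts $({}_{b}\nabla^{-\alpha}f^\sigma)(t)$ into the right fractional sum of $f$ at $\sigma(t)$ with endpoint $b+1$.

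For the differences, I would record the elementary commutation
\begin{equation*}
\nabla(g\circ\rho)(t) \;=\; g(t-1)-g(t-2) \;=\; (\nabla g)(\rho(t)),
\end{equation*}
which iterates to $\nabla^{n}(g\circ\rho)=(\nabla^{n} g)\circ\rho$, and the analogous forward version $\Delta^{n}(g\circ\sigma)=(\Delta^{n} g)\circ\sigma$. Plugging these into the Riemann factorizations $\nabla_a^\alpha = \nabla^n \circ \nabla_a^{-(n-\alpha)}$ and ${}_b\nabla^\alpha = {}_{\ominus}\Delta^n \circ {}_b\nabla^{-(n-\alpha)}$ and applying (1) and (4) to the inner fractional sums yields (2) and (5). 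For the Caputo identities (3) and (6), the commutation rule is used first, to pull $\rho$ (resp.\ $\sigma$) through $\nabla^n$ (resp.\ $\Delta^n$), and then (1) or (4) is applied to the resulting fractional sum.

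The only care the proof requires is in simultaneously tracking the one-unit shifts in the summation bounds and in the base points $a\mapsto a-1$ and $b\mapsto b+1$; since every step is either a bijective reindexing or an application of a commutation rule, no genuine obstacle arises. The main ``difficulty'' is therefore bookkeeping, and the verification of the Caputo cases is slightly more delicate only because one must be careful that the operator $\nabla^n$ (or $\Delta^n$) passes through the composition with $\rho$ (or $\sigma$) before the identity for the fractional sum is invoked.
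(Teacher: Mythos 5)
Your proposal is correct and complete: the reindexing $u=s-1$ (resp.\ $u=s+1$) for the fractional sums, combined with the commutation rules $\nabla^{n}(g\circ\rho)=(\nabla^{n}g)\circ\rho$ and $\Delta^{n}(g\circ\sigma)=(\Delta^{n}g)\circ\sigma$ applied to the factorizations $\nabla_a^{\alpha}=\nabla^{n}\nabla_a^{-(n-\alpha)}$, $~^{C}\nabla_a^{\alpha}=\nabla_a^{-(n-\alpha)}\nabla^{n}$ and their right-sided analogues, gives all six identities, and the bookkeeping of the shifted base points $a\mapsto a-1$, $b\mapsto b+1$ is handled correctly. The paper itself states this proposition explicitly without proof, so there is nothing to compare against; your argument is the natural one and supplies the omitted verification.
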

\section{Integration by parts for fractional sums and differences}

In this section we state the integration by parts formulas for nabla fractional sums and differences obtained in \cite{THFer}, whereafter in \cite{Th ADE}, delta by parts formulas are obtained by using certain dual identities. Then, we proceed to obtain  a one more integration by parts formula where both Riemann and Caputo fractional differences can appear.

\begin{prop} \label{summation by parts}\cite{THFer}
For $\alpha>0$, $a,b \in \mathbb{R}$, $f$ defined on $\mathbb{N}_a$ and $g$ defined on $~_{b}\mathbb{N}$, we have

\begin{equation}\label{sum1}
    \sum_{s=a+1}^{b-1}g(s) \nabla_a^{-\alpha} f(s)=\sum_{s=a+1}^{b-1}f(s)~_{b}\nabla^{-\alpha}g(s).
\end{equation}
\end{prop}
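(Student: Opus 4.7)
The plan is to prove the identity by direct manipulation of the defining sums: expand the left fractional sum $\nabla_a^{-\alpha}f$ by its definition (1.4), producing a double sum, and then swap the order of summation (a discrete Fubini argument) so that the resulting inner sum can be recognized as $~_b\nabla^{-\alpha}g$ via definition (1.5).

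First I would substitute (1.4) into the left-hand side to get
\begin{equation*}
\sum_{s=a+1}^{b-1} g(s)\, \nabla_a^{-\alpha}f(s) \;=\; \frac{1}{\Gamma(\alpha)}\sum_{s=a+1}^{b-1} g(s) \sum_{r=a+1}^{s} (s-\rho(r))^{\overline{\alpha-1}} f(r).
\end{equation*}
Next I would interchange the order of the two sums. The index region is the triangle $\{(r,s) : a+1\le r \le s \le b-1\}$, which, when $r$ is taken as the outer variable, runs over $a+1 \le r \le b-1$ and $r \le s \le b-1$. Factoring $f(r)$ out of the inner sum then gives
\begin{equation*}
\frac{1}{\Gamma(\alpha)} \sum_{r=a+1}^{b-1} f(r) \sum_{s=r}^{b-1} (s-\rho(r))^{\overline{\alpha-1}} g(s).
\end{equation*}
The inner expression is, by the very definition (1.5) of the right fractional sum (using the form $(s-\rho(t))^{\overline{\alpha-1}}$ with $t=r$), equal to $~_{b}\nabla^{-\alpha}g(r)$. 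Relabelling the dummy index $r\to s$ yields the right-hand side and finishes the argument.

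The calculation is essentially routine; the only delicate point is the bookkeeping after the swap of summation. One must verify that the kernel appearing in the left sum, namely $(s-\rho(r))^{\overline{\alpha-1}}$ with $s$ the outer index and $r$ the inner one, is \emph{exactly} the kernel appearing in the right sum (1.5) when the roles of the two indices are exchanged: there, with $r$ playing the role of the base point, the kernel is $(s-\rho(r))^{\overline{\alpha-1}}$, which matches. So no shift in the argument of the rising factorial is introduced, and the asymmetry between $\rho$ and $\sigma$ (which is subtle in this calculus, as stressed right before Proposition \ref{properties}) causes no trouble here. The implicit hypothesis $a\equiv b \pmod{1}$ ensures that the sums are over a consistent integer lattice, so the double-sum region is a genuine triangle on which Fubini's theorem for finite sums applies without qualification.
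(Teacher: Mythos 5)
Your argument is correct, and it is the standard proof of this identity; the paper itself states the proposition without proof, simply citing \cite{THFer}, where exactly this interchange-of-summation argument is used. Your key check is the right one: writing the left sum as a double sum over the triangle $a+1\le r\le s\le b-1$ with kernel $(s-\rho(r))^{\overline{\alpha-1}}=(s-r+1)^{\overline{\alpha-1}}$ and swapping the order of summation reproduces verbatim the kernel of the right fractional sum (\ref{nrs}) evaluated at $r$, so no index shift is introduced and the identity follows.
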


\begin{prop} \label{nabla bydiff} \cite{THFer}
Let $\alpha>0$ be non-integer and $a,b\in \mathbb{R}$ such that $a< b$ and $b\equiv
a~(mod~1)$.If $f$ is defined on $ _{b}\mathbb{N}$ and $g$ is
defined on $\mathbb{N}_a$, then
\begin{equation}\label{nabla bydiff1}
\sum_{s=a+1}^{b-1} f(s)\nabla_a^\alpha g(s)
=\sum_{s=a+1}^{b-1}g(s)~_{b}\nabla^\alpha f(s).
\end{equation}
\end{prop}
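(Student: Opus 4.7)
The plan is to establish the identity by unfolding the left Riemann nabla fractional difference via its definition, transporting the outer $\nabla^{n}$ from the $g$-side to the $f$-side using iterated discrete nabla integration by parts, and then invoking Proposition~\ref{summation by parts} on the residual fractional sum.

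Concretely, I would set $h(s):=\nabla_a^{-(n-\alpha)}g(s)$, so that by definition (\ref{nld}),
\begin{equation*}
\sum_{s=a+1}^{b-1} f(s)\,\nabla_a^\alpha g(s)=\sum_{s=a+1}^{b-1} f(s)\,\nabla^n h(s).
\end{equation*}
I would then iterate the discrete nabla Leibniz rule $\nabla(FG)(s)=G(s)\nabla F(s)+F^\rho(s)\nabla G(s)$ a total of $n$ times. Under the grid-alignment condition $b\equiv a\pmod 1$ together with the boundary conventions implicit in the statement (the same ones that render Proposition~\ref{summation by parts} boundary-term-free), the telescoping contributions cancel and $\nabla^{n}$ is relocated onto $f$ as $(-1)^{n}\Delta^{n}={}_{\ominus}\Delta^{n}$, producing
\begin{equation*}
\sum_{s=a+1}^{b-1} h(s)\, {}_{\ominus}\Delta^n f(s).
\end{equation*}
Applying Proposition~\ref{summation by parts} with order $n-\alpha$ to the pair $({}_{\ominus}\Delta^n f,\, g)$ then converts the left fractional sum on $g$ into a right fractional sum on ${}_{\ominus}\Delta^n f$, yielding
\begin{equation*}
\sum_{s=a+1}^{b-1} g(s)\, {}_{b}\nabla^{-(n-\alpha)}\bigl({}_{\ominus}\Delta^n f\bigr)(s).
\end{equation*}

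The last step is to recognize this expression as $\sum_{s=a+1}^{b-1} g(s)\, {}_{b}\nabla^\alpha f(s)$. By (\ref{nrd}), ${}_{b}\nabla^\alpha f={}_{\ominus}\Delta^{n}\, {}_{b}\nabla^{-(n-\alpha)}f$, so the matter reduces to commuting ${}_{b}\nabla^{-(n-\alpha)}$ with ${}_{\ominus}\Delta^{n}$. I would verify this commutation by expanding both compositions through the kernel representation (\ref{nrs}), using the rising-factorial derivative rule (\ref{oper}) to rewrite the inner shift, and then rearranging the resulting finite double sum.

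The principal obstacle I anticipate is precisely this last identification: a direct chain of by-parts operations naturally lands on the right \emph{Caputo} fractional difference ${}^{C}_{b}\nabla^\alpha f$ rather than the right Riemann one appearing in the statement, and closing the gap requires either carefully tracking the boundary behavior of $f$ at $b$ or, preferably, an algebraic commutation argument at the kernel level (with any residual polynomial correction vanishing under the standing hypotheses). Everything else is routine telescoping and an appeal to Proposition~\ref{summation by parts}.
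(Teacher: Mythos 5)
The paper itself does not prove this proposition; it is imported from \cite{THFer}, where the argument is entirely different from yours. For non-integer $\alpha$ one first shows that the Riemann differences are themselves fractional sums of negative order, $\nabla_a^{\alpha}g(t)=\frac{1}{\Gamma(-\alpha)}\sum_{s=a+1}^{t}(t-\rho(s))^{\overline{-\alpha-1}}g(s)$ and ${}_{b}\nabla^{\alpha}f(t)=\frac{1}{\Gamma(-\alpha)}\sum_{s=t}^{b-1}(s-\rho(t))^{\overline{-\alpha-1}}f(s)$; the identity is then literally Proposition \ref{summation by parts} with $\alpha$ replaced by $-\alpha$, proved by interchanging the order of a finite double sum, so no boundary terms ever arise. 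This also explains the hypothesis that $\alpha$ be non-integer (it keeps $\Gamma(-\alpha)$ finite), a hypothesis your argument never uses --- a warning sign, since for integer $\alpha$ the stated formula fails without boundary terms.

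Your route has two genuine gaps that do not close as written. First, the $n$-fold nabla summation by parts \emph{does} produce boundary terms: already for $n=1$, with $h=\nabla_a^{-(1-\alpha)}g$ one has $\sum_{s=a+1}^{b-1}f(s)\nabla h(s)=f(b-1)h(b-1)-f(a)h(a)-\sum_{s=a+1}^{b-1}h(\rho(s))\nabla f(s)$; the lower term vanishes because $h(a)=0$, but $f(b-1)h(b-1)$ does not, and the surviving sum carries $h^{\rho}$, not $h$. A direct check with $a=0$, $b=3$, $0<\alpha<1$ shows your claimed intermediate expression $\sum_{s}h(s)\,{}_{\ominus}\Delta f(s)$ differs from the left-hand side by $f(b)h(b-1)\neq 0$ in general, so the assertion that ``the telescoping contributions cancel'' is false. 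Second, the commutation ${}_{b}\nabla^{-(n-\alpha)}\,{}_{\ominus}\Delta^{n}f={}_{\ominus}\Delta^{n}\,{}_{b}\nabla^{-(n-\alpha)}f$ is exactly the claim that the right Caputo and right Riemann differences coincide, which is false for general $f$: the present paper's Theorems \ref{Caputo by parts} and \ref{Riemann2 by parts} take different forms precisely because these two operators differ. There is no standing hypothesis under which the ``residual polynomial correction'' vanishes; rather, the Caputo--Riemann discrepancy must cancel exactly against the boundary terms you discarded in the first step, and exhibiting that cancellation is the entire content of the proposition. As written, your chain would land on $\sum_{s}g(s)\,{}^{C}_{b}\nabla^{\alpha}f$ plus uncontrolled boundary contributions, not on the stated right-hand side.
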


Now by the above nabla integration by parts formulas and using  dual identities in \cite{Th ADE}, the following  delta integration by parts formulae were obtained.

\begin{prop} \label{delta by parts semmation}

Let $\alpha>0$, $a,b\in \mathbb{R}$ such that $a< b$ and $b\equiv
a~(mod~1)$. If $f$ is defined on $\mathbb{N}_a$ and $g$ is defined on
$_{b}\mathbb{N}$, then we have
\begin{equation}\label{byse}
\sum_{s=a+1}^{b -1}g(s)(\Delta_{a+1}
^{-\alpha}f)(s+\alpha)=\sum_{s=a+1}^{b-1}f(s) ~_{b-1}\Delta ^{-\alpha}g(s-\alpha).
\end{equation}

\end{prop}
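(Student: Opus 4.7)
The plan is to reduce the delta integration by parts identity to the nabla one (Proposition~2.1) via the dual identities between delta and nabla fractional sums established in~\cite{Th ADE}. Specifically, the two identities I intend to invoke are
\begin{equation*}
(\Delta_{a+1}^{-\alpha} f)(t+\alpha) \;=\; (\nabla_a^{-\alpha} f)(t),\qquad
({}_{b-1}\Delta^{-\alpha} g)(t-\alpha) \;=\; ({}_{b}\nabla^{-\alpha} g)(t).
\end{equation*}
These are direct consequences of the equality of kernels
$(t-\rho(s))^{\overline{\alpha-1}} = (t+\alpha-\sigma(s))^{(\alpha-1)}$
(both equal $\Gamma(t-s+\alpha)/\Gamma(t-s+1)$), so the delta sum with shifted index and base $a+1$ runs over the same index range as the nabla sum with base $a$ and yields the same summand. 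The analogous check on the right sum produces the companion identity.

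Once these dual identities are accepted, the proof is essentially a one-line substitution. First, I would rewrite the left-hand side of~\eqref{byse} as
\begin{equation*}
\sum_{s=a+1}^{b-1} g(s)\,(\Delta_{a+1}^{-\alpha} f)(s+\alpha)
\;=\; \sum_{s=a+1}^{b-1} g(s)\,(\nabla_a^{-\alpha} f)(s).
\end{equation*}
Proposition~2.1 then converts this to $\sum_{s=a+1}^{b-1} f(s)\,({}_{b}\nabla^{-\alpha} g)(s)$, and applying the second dual identity transforms the nabla right sum back into a delta right sum evaluated at $s-\alpha$ with base $b-1$, giving exactly the right-hand side of~\eqref{byse}.

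The main obstacle, such as it is, lies in the dual identities themselves rather than in the by-parts manipulation: one has to keep careful track of how the base points shift ($a \leftrightarrow a+1$ on the left, $b \leftrightarrow b-1$ on the right) and how the argument shifts by $\pm\alpha$ when passing between the delta and nabla worlds. Since these shifts are documented in~\cite{Th ADE} and the index ranges $s=a+1,\dots,b-1$ are preserved under both dualizations, no additional boundary correction terms appear, and the reduction is clean.
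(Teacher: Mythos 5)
Your proposal is correct and takes essentially the same route the paper intends: the paper offers no explicit proof, stating only that the delta formulas follow from the nabla integration by parts formulas together with the dual identities of \cite{Th ADE}, which is precisely your reduction of \eqref{byse} to Proposition \ref{summation by parts} via $(\Delta_{a+1}^{-\alpha}f)(t+\alpha)=(\nabla_a^{-\alpha}f)(t)$ and $({}_{b-1}\Delta^{-\alpha}g)(t-\alpha)=({}_{b}\nabla^{-\alpha}g)(t)$. Your kernel computation $\Gamma(t-s+\alpha)/\Gamma(t-s+1)$ and the bookkeeping of the base-point and $\pm\alpha$ argument shifts are accurate, so the argument is complete.
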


\begin{prop} \label{delta by parts semmation}

Let $\alpha>0$ be non-integer and assume that $b\equiv a~(mod~1)$. If $f$ is defined on $ _{b}\mathbb{N}$ and $g$ is
defined on $\mathbb{N}_a$, then
\begin{equation}\label{bydiff1}
\sum_{s=a+1}^{b-1} f(s)\Delta_{a+1}^\alpha
g(s-\alpha)=\sum_{s=a+1}^{b-1}g(s)~_{b-1}\Delta^\alpha f(s+\alpha).
\end{equation}

\end{prop}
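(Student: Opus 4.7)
The plan is to reduce this delta-difference identity to the nabla-difference identity already proved in Proposition \ref{nabla bydiff}, by invoking the dual identities of \cite{Th ADE}. These dual identities express a left delta fractional difference at a shifted point as a left nabla fractional difference, and similarly for the right-sided versions; schematically one has relations of the form $\Delta_{a+1}^{\alpha} g(s-\alpha)=\nabla_a^{\alpha} g(s)$ and $\,_{b-1}\Delta^{\alpha} f(s+\alpha)=\,_{b}\nabla^{\alpha} f(s)$, valid under the hypotheses $b\equiv a\pmod 1$ and $\alpha$ noninteger.

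Assuming these dualities, the first step is to rewrite the left-hand side of \eqref{bydiff1} by applying the left dual identity to $g$: the summand $f(s)\Delta_{a+1}^{\alpha} g(s-\alpha)$ becomes $f(s)\nabla_a^{\alpha} g(s)$, with the summation range $s=a+1,\ldots,b-1$ unaffected because the dual identity is a pointwise equality on the overlap of the relevant domains. The second step is to rewrite the right-hand side of \eqref{bydiff1} using the right dual identity for $f$: $g(s)\,_{b-1}\Delta^{\alpha} f(s+\alpha)$ becomes $g(s)\,_{b}\nabla^{\alpha} f(s)$, again on the same index range.

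After these two substitutions, the identity to prove becomes exactly
\begin{equation*}
\sum_{s=a+1}^{b-1} f(s)\,\nabla_a^{\alpha} g(s) = \sum_{s=a+1}^{b-1} g(s)\,_{b}\nabla^{\alpha} f(s),
\end{equation*}
which is precisely the content of Proposition \ref{nabla bydiff}, applicable because the compatibility conditions ($\alpha$ noninteger, $b\equiv a\pmod 1$, $f$ on $_{b}\mathbb{N}$, $g$ on $\mathbb{N}_a$) are inherited from the hypotheses of the present proposition.

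The main technical obstacle is bookkeeping the index shifts: one must verify that the arguments $s-\alpha$ and $s+\alpha$ in the delta objects land in the domains where the dual identities hold, that the required shift of base point ($a+1\to a$ on the left and $b-1\to b$ on the right) matches precisely, and that the summation endpoints remain $a+1$ and $b-1$ after the substitution. Provided one sets up the dual identities cleanly at the start, the remainder of the argument is a direct appeal to Proposition \ref{nabla bydiff} with no further computation.
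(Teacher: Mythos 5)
Your proposal is correct and matches the paper's own route: the paper explicitly states that these delta integration by parts formulae are obtained from the nabla formula of Proposition \ref{nabla bydiff} via the dual identities of \cite{Th ADE}, which is exactly the reduction you carry out. The only caveat is that the burden of the argument lies in verifying the precise form of those dual identities (the base-point shifts $a+1\to a$ and $b-1\to b$ together with the $\mp\alpha$ argument shifts), which you correctly flag as the remaining bookkeeping.
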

The following version of integration by parts contains boundary conditions.

\begin{thm} \label{Caputo by parts} \cite{Th DDNS}
Let $0<\alpha<1$ and $f,g$ be functions defined on $\mathbb{N}_a \cap ~_{b}\mathbb{N}$ where $a\equiv b ~(mod ~1)$. Then
\begin{equation} \label{cbp1}
\sum_{s=a+1}^{b-1} g(s) ~^{C}\nabla_a^\alpha f(s)=f(s) ~_{b}\nabla^{-(1-\alpha)}g(s)\mid_a^{b-1}+ \sum_{s=a+1}^{b-1} f(s-1) (~_{b}\nabla^\alpha g)(s-1),
\end{equation}
where clearly $_{b}\nabla^{-(1-\alpha)}g(b-1)= g(b-1)$.
\end{thm}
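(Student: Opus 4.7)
My approach is to reduce the Caputo identity to the Riemann sum-by-parts formula of Proposition \ref{summation by parts}, combined with the classical nabla product rule. Since $0<\alpha<1$ forces $n=[\alpha]+1=1$, the definition of ${}^{C}\nabla_a^\alpha$ collapses to ${}^{C}\nabla_a^\alpha f(s)=\nabla_a^{-(1-\alpha)}\nabla f(s)$. Substituting this into the left-hand side of (\ref{cbp1}) and applying Proposition \ref{summation by parts} with $\alpha$ replaced by $1-\alpha$ and with $\nabla f$ in place of $f$ yields
\begin{equation*}
\sum_{s=a+1}^{b-1} g(s)\,{}^{C}\nabla_a^\alpha f(s) \;=\; \sum_{s=a+1}^{b-1} (\nabla f)(s)\, h(s),
\end{equation*}
where for brevity I set $h(s):={}_b\nabla^{-(1-\alpha)} g(s)$.

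Next I would apply the ordinary nabla product rule $\nabla(hf)(s)=h(s)\nabla f(s)+f(s-1)\nabla h(s)$ and telescope on $s=a+1,\dots,b-1$, obtaining the classical discrete by-parts formula
\begin{equation*}
\sum_{s=a+1}^{b-1} h(s)\nabla f(s) \;=\; h(s)f(s)\Big|_a^{b-1} \;-\; \sum_{s=a+1}^{b-1} f(s-1)\,\nabla h(s).
\end{equation*}

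The main (and only delicate) step is to recognize $\nabla h(s)$ as, up to sign, the right nabla Riemann difference of $g$ shifted by one. Using the $n=1$ case of Definition \ref{fractional differences} we have $({}_b\nabla^\alpha g)(t)=-\Delta\,{}_b\nabla^{-(1-\alpha)} g(t)=-\Delta h(t)$, so that
\begin{equation*}
({}_b\nabla^\alpha g)(s-1) \;=\; -\Delta h(s-1) \;=\; -(h(s)-h(s-1)) \;=\; -\nabla h(s).
\end{equation*}
Substituting this into the previous display turns the minus sign in front of the sum into a plus and produces exactly the right-hand side of (\ref{cbp1}).

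Finally, the parenthetical claim ${}_b\nabla^{-(1-\alpha)}g(b-1)=g(b-1)$ follows immediately from (\ref{nrs}): at $t=b-1$ the defining sum collapses to the single term $s=b-1$ with weight $1^{\overline{-\alpha}}/\Gamma(1-\alpha)=\Gamma(1-\alpha)/\Gamma(1-\alpha)=1$. The only bookkeeping hazard throughout is keeping the $\Delta$ versus $\nabla$ (and the sign coming from ${}_{\circleddash}\Delta$) straight in the right-sided operator; once that relation is settled, the proof is a one-line combination of Proposition \ref{summation by parts} and the classical nabla product rule.
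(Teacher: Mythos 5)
Your proof is correct. The paper itself imports this theorem from \cite{Th DDNS} without reproving it, but your argument is precisely the mirror image of the paper's own proof of the companion result, Theorem \ref{Riemann2 by parts}: there the left Riemann difference $\nabla\nabla_a^{-(1-\alpha)}$ has the integer-order operator on the outside, so the author applies the classical nabla by-parts first and Proposition \ref{summation by parts} second; in your case the left Caputo difference $\nabla_a^{-(1-\alpha)}\nabla$ has the fractional sum on the outside, so you correctly reverse the order, applying Proposition \ref{summation by parts} first and then the classical nabla by-parts. All the delicate bookkeeping is handled properly: the identity $({}_b\nabla^\alpha g)(s-1)=-\Delta\,{}_b\nabla^{-(1-\alpha)}g(s-1)=-\nabla\,{}_b\nabla^{-(1-\alpha)}g(s)$ is exactly what converts the minus sign from the telescoping step into the plus sign in \eqref{cbp1}, and your evaluation $1^{\overline{-\alpha}}=\Gamma(1-\alpha)$ correctly justifies the parenthetical remark ${}_b\nabla^{-(1-\alpha)}g(b-1)=g(b-1)$.
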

Similarly, if interchange the role of Caputo and Riemann we obtain the following version of integration by parts for fractional differences.
\begin{thm} \label{Riemann2 by parts}
Let $0<\alpha<1$ and $f,g$ be functions defined on $\mathbb{N}_a \cap ~_{b}\mathbb{N}$ where $a\equiv b ~(mod ~1)$. Then

\begin{eqnarray}\label{cbp1}\nonumber
  \sum_{s=a+1}^{b-1} f(s-1) \nabla_a^\alpha g(s) &=& f(s) \nabla_a^{-(1-\alpha)}g(s)\mid_a^{b-1}+ \sum_{s=a}^{b-2} g(s+1) ~~^{C}_{b}\nabla^\alpha f(s) \\
   &=& f(s) \nabla_a^{-(1-\alpha)}g(s)\mid_a^{b-1}+ \sum_{s=a+1}^{b-1} g(s) ~~(^{C}_{b}\nabla^\alpha f)(s-1)
\end{eqnarray}

where clearly $\nabla_a^{-(1-\alpha)}g(a)= 0$.
\end{thm}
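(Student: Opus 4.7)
The strategy mirrors the proof of Theorem~\ref{Caputo by parts} but performs the summation by parts in the opposite order: first unfold the left Riemann difference as a plain backward difference of a fractional sum, then use a discrete product rule to move that backward difference off $g$ and onto $f$, and finally invoke Proposition~\ref{summation by parts} in order to recognize the resulting inner expression as a right Caputo difference.

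I would begin by writing $\nabla_a^\alpha g(s)=\nabla V(s)$, where $V(s):=\nabla_a^{-(1-\alpha)}g(s)$; this is legitimate since $0<\alpha<1$. Next I would use the discrete Leibniz-type identity
\begin{equation*}
\Delta\bigl[f(s-1)V(s-1)\bigr]=\Delta f(s-1)\,V(s)+f(s-1)\,\nabla V(s),
\end{equation*}
obtained by adding and subtracting $f(s-1)V(s)$, to rewrite $f(s-1)\nabla V(s)$ as a telescoping $\Delta$-piece plus ${_\ominus}\Delta f(s-1)\cdot V(s)$. Summing over $s=a+1,\ldots,b-1$, the $\Delta$-piece collapses to the boundary contribution $f(s)\nabla_a^{-(1-\alpha)}g(s)\mid_{a}^{b-1}$; the lower endpoint vanishes because $\nabla_a^{-(1-\alpha)}g(a)$ is an empty sum, which is exactly the remark appended to the statement.

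What remains is $\sum_{s=a+1}^{b-1}{_\ominus}\Delta f(s-1)\cdot\nabla_a^{-(1-\alpha)}g(s)$. I would then apply Proposition~\ref{summation by parts} with the roles exchanged, letting $h(s):={_\ominus}\Delta f(s-1)$ play the part of the function defined on ${_{b}}\mathbb{N}$; this converts the expression into $\sum_{s=a+1}^{b-1}g(s)\,{_{b}}\nabla^{-(1-\alpha)}h(s)$. The final identification ${_{b}}\nabla^{-(1-\alpha)}h(s)=({^{C}_{b}}\nabla^\alpha f)(s-1)$ should follow from a change of summation index $r\mapsto r+1$ inside the defining sum of ${_{b}}\nabla^{-(1-\alpha)}$, matching the formula~(\ref{Cdual right}) for the right Caputo evaluated at $s-1$. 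The second form of the identity is then just the dummy-variable shift $s\mapsto s+1$ in the resulting sum.

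The delicate step is the last identification. Proposition~\ref{properties} furnishes shift identities for the left operators under $\rho$ and for the right operators under $\sigma$, but not the right-side analogue under $\rho$ that is implicitly needed here. One therefore has to verify by direct computation with the summation limits that the $\rho$-shift of $f$ inside the right fractional sum matches the $\rho$-shift of the evaluation point at the level of the right Caputo difference, paying particular attention to the boundary contribution at $r=b-1$ where the two index ranges differ. The choice of $f(s-1)$ rather than $f(s)$ on the left-hand side is precisely what makes the product rule in the second step yield a clean telescoping $\Delta$-difference, forcing the boundary to collapse into $\nabla_a^{-(1-\alpha)}g$ evaluated at the prescribed endpoints.
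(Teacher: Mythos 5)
Your argument reproduces the paper's proof essentially verbatim: write $\nabla_a^\alpha g(s)=\nabla \nabla_a^{-(1-\alpha)}g(s)$, perform ordinary nabla summation by parts to extract the boundary term $f(s)\nabla_a^{-(1-\alpha)}g(s)\mid_a^{b-1}$, transfer the remaining fractional sum with Proposition \ref{summation by parts}, and then recognize $-~_{b}\nabla^{-(1-\alpha)}\nabla f(s)$ as $(~^{C}_{b}\nabla^{\alpha}f)(s-1)$. The only difference is that you explicitly flag the final index-shift identification as the delicate step, whereas the paper asserts it without comment; your instinct to check the $r=b-1$ boundary contribution there is exactly the right place to look.
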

\begin{proof}
From the definition of the left Riemann fractional difference, the integration by parts in $\nabla-$difference calculus,  Proposition \ref{summation by parts}, noting that $\nabla f(s)=\Delta f(s-1)$, and the definition of right Caputo fractional difference we can write
\begin{eqnarray} \nonumber
  \sum_{s=a+1}^{b-1} f(s-1) \nabla_a^\alpha g(s) &=& \sum_{s=a+1}^{b-1} f(s-1)\nabla \nabla_a^{-(1-\alpha)} g(s) \\ \nonumber
   &=& f(s) \nabla_a^{-(1-\alpha)}g(s)\mid_a^{b-1}-\sum_{s=a+1}^{b-1} \nabla f(s) (\nabla_a^{-(1-\alpha)}g)(s) \\ \nonumber
   &=& f(s) \nabla_a^{-(1-\alpha)}g(s)\mid_a^{b-1}-\sum_{s=a+1}^{b-1}g(s)~_{b}\nabla^{-(1-\alpha)} \nabla f(s)\\ \nonumber
   &=&f(s) \nabla_a^{-(1-\alpha)}g(s)\mid_a^{b-1}+ \sum_{s=a}^{b-2} g(s+1) ~~^{C}_{b}\nabla^\alpha f(s)\\ \nonumber
   &=&f(s) \nabla_a^{-(1-\alpha)}g(s)\mid_a^{b-1}+ \sum_{s=a+1}^{b-1} g(s) ~~(^{C}_{b}\nabla^\alpha f)(s-1).
    \end{eqnarray}

 Hence, the proof is completed.

\end{proof}
\section{Fractional difference Euler-Lagrange Equations}

\begin{thm}\label{m1}
Let $\alpha>0$ be non-integer, $a,b \in \mathbb{R}$, and  $f$ is defined on $\mathbb{N}_a \cap ~_{b}\mathbb{N}$, where $a\equiv b ~(mod ~1)$. Assume that the functional
 $$J(f)=\sum_{t=a+1}^{b-1} L(t,f(t),\nabla_{a-1}^\alpha f(t) )$$
 has a local extremum in $S=\{y:\mathbb{N}_a \cap ~_{b}\mathbb{N}\rightarrow \mathbb{R}~\texttt{is bounded}, ~~y(a)=A\}$ at some $f \in S$, where
 $L:(\mathbb{N}_a \cap ~_{b}\mathbb{N})\times \mathbb{R}\times \mathbb{R}\rightarrow \mathbb{R}$. Then,
\begin{equation}\label{E1}
[L_1(s) + ~_b\nabla^\alpha L_2(s)]  =0,~\texttt{for all}~ s \in (\mathbb{N}_{a+1} \cap ~_{b-1}\mathbb{N}),
\end{equation}
where $L_1(s)= \frac{\partial L}{\partial f}(s)$ and $L_2(s)=\frac{\partial L}{\partial \nabla_a^\alpha f}(s)$.
\end{thm}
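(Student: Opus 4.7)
The plan is the classical variational argument adapted to the discrete fractional setting, and it splits naturally into four steps.

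First, I would take an arbitrary variation $\eta:\mathbb{N}_a\cap{}_b\mathbb{N}\to\mathbb{R}$ subject to the essential constraint $\eta(a)=0$, so that $f+\varepsilon\eta\in S$ for every $\varepsilon\in\mathbb{R}$. Setting $\Phi(\varepsilon)=J(f+\varepsilon\eta)$, the hypothesis that $f$ is a local extremum of $J$ in $S$ gives the necessary condition $\Phi'(0)=0$. Differentiating termwise in the finite sum and using the linearity of $\nabla_{a-1}^\alpha$ together with the chain rule applied to $L$, I would obtain
$$\Phi'(0)=\sum_{t=a+1}^{b-1}\Bigl[L_1(t)\,\eta(t)+L_2(t)\,\nabla_{a-1}^\alpha\eta(t)\Bigr]=0.$$

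Second, I would reconcile the two fractional-difference base points. Because $\eta(a)=0$, the single summand that distinguishes $\nabla_{a-1}^{-(1-\alpha)}\eta(t)$ from $\nabla_a^{-(1-\alpha)}\eta(t)$ vanishes, so $\nabla_{a-1}^\alpha\eta(t)=\nabla_a^\alpha\eta(t)$ for every $t\in\mathbb{N}_{a+1}$. This lets me replace $\nabla_{a-1}^\alpha\eta$ by $\nabla_a^\alpha\eta$ in the first-variation identity above.

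Third, I would apply the nabla integration-by-parts identity (Proposition \ref{nabla bydiff}) with the assignment $f\leftrightarrow L_2$ (viewed on ${}_b\mathbb{N}$) and $g\leftrightarrow\eta$ (viewed on $\mathbb{N}_a$) to transform the fractional-difference term, arriving at
$$\sum_{t=a+1}^{b-1}\eta(t)\Bigl[L_1(t)+{}_b\nabla^\alpha L_2(t)\Bigr]=0.$$
Fourth, I would invoke the discrete fundamental lemma of the calculus of variations: given any $s\in\mathbb{N}_{a+1}\cap{}_{b-1}\mathbb{N}$, choose $\eta$ to be the Kronecker delta supported at $s$ (which automatically respects $\eta(a)=0$ since $s\neq a$). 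This forces the bracketed coefficient to vanish at $s$, and since $s$ was arbitrary the Euler-Lagrange equation \eqref{E1} follows.

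The only real subtlety lies in Step two, where the index shift between the $\nabla_{a-1}^\alpha$ appearing in the Lagrangian and the $\nabla_a^\alpha$ presupposed by Proposition \ref{nabla bydiff} must be handled cleanly; the boundary value $\eta(a)=0$ is exactly what makes the two operators coincide on the relevant summation range, avoiding spurious contributions at $t=a$. I do not expect a transversality condition at $b$ to emerge, because Proposition \ref{nabla bydiff} is free of boundary terms: the non-locality of ${}_b\nabla^\alpha$ absorbs what would otherwise appear at the right endpoint.
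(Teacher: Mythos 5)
Your argument is correct and follows essentially the same route as the paper: the same first variation, the same observation that $\eta(a)=0$ annihilates the discrepancy between $\nabla_{a-1}^\alpha\eta$ and $\nabla_a^\alpha\eta$, the same boundary-term-free integration by parts from Proposition \ref{nabla bydiff} with $L_2$ and $\eta$ in the roles you indicate, and the same Kronecker-delta test functions at the end. The only cosmetic point is that for general non-integer $\alpha>0$ the relevant sum in your Step two is $\nabla_{a-1}^{-(n-\alpha)}$ with $n=[\alpha]+1$ rather than $\nabla_{a-1}^{-(1-\alpha)}$, but the single extra summand at $s=a$ is still proportional to $\eta(a)$, so the step goes through unchanged.
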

\begin{proof}
Without loss of generality, assume that $J$ has local maximum in $S$ at $f$. Hence, there exists an $\epsilon>0$ such that $J(\widehat{f})-J(f)\leq 0$ for all $\widehat{f}\in S$ with $\|\widehat{f}-f\|=\sup_{t \in \mathbb{N}_a \cap ~_{b}\mathbb{N}} |\widehat{f}(t)-f(t)|< \epsilon$. For any $\widehat{f} \in S$ there is an $\eta \in H=\{y:\mathbb{N}_a \cap ~_{b}\mathbb{N}\rightarrow \mathbb{R}~\texttt{is bounded}, ~~y(a)=0\}$ such that $\widehat{f}=f+\epsilon \eta$. Then, the $\epsilon-$Taylor's theorem implies that
$$L(t,f,\widehat{f})=L(t,f+\epsilon \eta,\nabla_{a-1}^\alpha f+\epsilon \nabla_{a-1}^\alpha \eta)=L(t,f,\nabla_{a-1}^\alpha f)+ \epsilon [\eta L_1+\nabla_a^\alpha\eta L_2]+O(\epsilon^2).$$ Then,

\begin{eqnarray}\nonumber
  J(\widehat{f})-J(f) &=& \sum_{t=a+1}^{b-1}L(t,\widehat{f}(t),\nabla_{a-1}^\alpha \widehat{f}(t))-\sum_{t=a+1}^{b-1}L(t,f(t),\nabla_{a-1}^\alpha f(t)) \\
  &=& \epsilon \sum_{t=a+1}^{b-1}[\eta(t) L_1(t)+ \nabla_{a-1}^\alpha \eta(t) L_2(t)]+ O(\epsilon^2).
   \end{eqnarray}
Let the quantity $\delta J(\eta,y)=\sum_{t=a+1}^{b-1}[\eta(t) L_1(t)+ \nabla_{a-1}^\alpha \eta(t) L_2(t)]$ denote the first variation of $J$.

Evidently, if $\eta \in H$ then $-\eta \in H$, and $\delta J(\eta,y)=-\delta J(-\eta,y)$. For $\epsilon$ small, the sign of $J(\widehat{f})-J(f)$ is determined by the sign of first variation, unless $\delta J(\eta,y)=0$ for all $\eta \in H$. To make the parameter $\eta$ free, we use the integration by part formula in Proposition \ref{nabla bydiff} together with the fact that $\nabla_{a-1}^\alpha \eta(t)=\nabla_{a}^\alpha \eta(t)+\frac{\eta(a)}{\Gamma(\alpha)}(t-a+1)^{\overline{\alpha-1}}$, to reach
$$\delta J(\eta,y)=\sum_{s=a+1}^{b-1} \eta (s)[L_1(s) + ~_b\nabla^\alpha L_2(s)]++\frac{\eta(a)}{\Gamma(\alpha)}\sum_{t=a+1}^{b-1}(t-a+1)^{\overline{\alpha-1}}L_2(t) =0,$$ for all $\eta \in H$, and hence the result follows by taking the special $\eta's$ in $\{e_t=(0,...,1,0,0,), ~1~\texttt{in t-th place}: t \in \mathbb{N}_{a+1} \cap ~_{b-1}\mathbb{N} \}$ with $\eta(a)=0$.
\end{proof}
Note that in the above theorem  the Riemann fractional variational difference problem will not require any boundary conditions at the points $a$ and $b-1$ if we consider $\nabla_a^\alpha$ instead of $\nabla_{a-1}^\alpha$ in the Lagrangian $L$ and hence the functions $\eta$ can be taken from $S$ again without any restrictions. This is due to that the used integration by parts formula does not contain any boundary conditions. Different boundary conditions can be generated at $b$ as well, if we terminate the sum at $b$ instead of $b-1$ . Next, we develop a discrete Reiamnn fractional variational problem of order $0<\alpha <1$ with different boundary conditions by making use of the integration by part formula in Theorem \ref{Riemann2 by parts}.

\begin{thm} \label{mm}
Let $0< \alpha <1$ be non-integer, $a,b \in \mathbb{R}$, and  $f$ is defined on $\mathbb{N}_a \cap ~_{b}\mathbb{N}$, where $a\equiv b ~(mod ~1)$. Assume that the functional
 $$J(y)=\sum_{t=a+1}^{b-1} L(t,f(t),\nabla_a^\alpha f(t) )$$
 has a local extremum in $S=\{y:\mathbb{N}_a \cap ~_{b}\mathbb{N}\rightarrow \mathbb{R}~\texttt{is bounded}\}$ at some $f \in S$, where
 $L:(\mathbb{N}_a \cap ~_{b}\mathbb{N})\times \mathbb{R}\times \mathbb{R}\rightarrow \mathbb{R}$. Further, assume either $\nabla_a^{-(1-\alpha)}f (b-1)=A$ or $L_2 (b)=0$. Then,
\begin{equation}\label{E2}
[L_1(s) + (~^{C}_b\nabla^\alpha L_2^\sigma)(s-1)]=[L_1(s) + (~^{C}_{b+1}\nabla^\alpha L_2)(s)] =0,~\texttt{for all}~ s \in (\mathbb{N}_{a+1} \cap ~_{b-1}\mathbb{N}),
\end{equation}
where $L_1(s)= \frac{\partial L}{\partial f}(s)$ and $L_2(s)=\frac{\partial L}{\partial \nabla_a^\alpha f}(s)$.
\end{thm}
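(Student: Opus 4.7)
The plan is to parallel the proof of Theorem \ref{m1}, but since the Lagrangian now contains $\nabla_a^\alpha f$ (not $\nabla_{a-1}^\alpha f$), I will use the integration by parts formula from Theorem \ref{Riemann2 by parts} instead of Proposition \ref{nabla bydiff}. This will produce a right \emph{Caputo} difference of $L_2$ and additional boundary terms at $a$ and $b-1$, which the hypothesis will be used to kill.

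First I would take a variation $\widehat f=f+\epsilon\eta$ with $\eta:\mathbb{N}_a\cap {}_b\mathbb{N}\to\mathbb{R}$ bounded (no condition at $a$ this time), expand by the $\epsilon$-Taylor theorem exactly as in Theorem \ref{m1}, and read off the first variation
$$\delta J(\eta,f)=\sum_{t=a+1}^{b-1}\bigl[\eta(t)L_1(t)+\nabla_a^\alpha\eta(t)\,L_2(t)\bigr],$$
which must vanish for every admissible $\eta$ by the usual sign argument.

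Next I would free $\eta$ from the fractional difference by applying Theorem \ref{Riemann2 by parts}. That theorem is written for expressions of the form $\sum f(s-1)\nabla_a^\alpha g(s)$, while my second sum has $L_2(s)\nabla_a^\alpha\eta(s)$, so I will rewrite $L_2(s)=L_2^\sigma(s-1)$ and apply the identity with $f=L_2^\sigma$ and $g=\eta$. This gives
$$\sum_{s=a+1}^{b-1}L_2(s)\nabla_a^\alpha\eta(s)=\bigl[L_2^\sigma(s)\nabla_a^{-(1-\alpha)}\eta(s)\bigr]_a^{b-1}+\sum_{s=a+1}^{b-1}\eta(s)\,({}^{C}_{b}\nabla^\alpha L_2^\sigma)(s-1),$$
and since $\nabla_a^{-(1-\alpha)}\eta(a)=0$ by Theorem \ref{Riemann2 by parts}, the boundary contribution reduces to $L_2(b)\,\nabla_a^{-(1-\alpha)}\eta(b-1)$. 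This is the step where the stated hypothesis enters: if $\nabla_a^{-(1-\alpha)}f(b-1)=A$ is prescribed then admissible variations satisfy $\nabla_a^{-(1-\alpha)}\eta(b-1)=0$, while if $L_2(b)=0$ the boundary term vanishes trivially.

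Once the boundary term is gone, $\delta J(\eta,f)=\sum_{s=a+1}^{b-1}\eta(s)\bigl[L_1(s)+({}^{C}_{b}\nabla^\alpha L_2^\sigma)(s-1)\bigr]=0$ for every admissible $\eta$. Testing against the indicator variations $\eta=e_t$ for $t\in\mathbb{N}_{a+1}\cap {}_{b-1}\mathbb{N}$ (which are admissible since $e_t$ is bounded, vanishes at $b-1$, and hence trivially satisfies either boundary condition) forces the bracket to vanish, yielding the first form of the Euler--Lagrange equation. The equivalent second form follows from Proposition \ref{properties}(6) applied with $t=s-1$, which gives $({}^{C}_{b}\nabla^\alpha L_2^\sigma)(s-1)=({}^{C}_{b+1}\nabla^\alpha L_2)(s)$. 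The only delicate point is the index juggling with $L_2^\sigma$ so the hypotheses of Theorem \ref{Riemann2 by parts} apply verbatim; everything else is bookkeeping.
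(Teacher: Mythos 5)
Your route is the same as the paper's: identify the first variation, rewrite $L_2(s)=L_2^\sigma(s-1)$ so that Theorem \ref{Riemann2 by parts} applies with $f=L_2^\sigma$ and $g=\eta$, note that the lower boundary term dies because $\nabla_a^{-(1-\alpha)}\eta(a)=0$, kill the upper boundary term $L_2(b)\,\nabla_a^{-(1-\alpha)}\eta(b-1)$ using the stated hypothesis, and pass to the second form of (\ref{E2}) via Proposition \ref{properties} 6). Up to the last step the proposal matches the paper's proof essentially line for line; the paper is merely terser.

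However, your justification of the final fundamental-lemma step contains a concrete error in the case where $\nabla_a^{-(1-\alpha)}f(b-1)=A$ is prescribed. You claim the indicator variations $e_t$ are admissible because $e_t$ ``vanishes at $b-1$,'' but the constraint on admissible variations is $\nabla_a^{-(1-\alpha)}\eta(b-1)=0$, a nonlocal condition on the fractional sum of $\eta$, not a pointwise condition at $b-1$. A direct computation from (\ref{nlf}) gives $\nabla_a^{-(1-\alpha)}e_t(b-1)=(b-t)^{\overline{-\alpha}}/\Gamma(1-\alpha)\neq 0$ for every $t\in\mathbb{N}_{a+1}\cap{}_{b-1}\mathbb{N}$, so none of the indicators lies in $H$. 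From $\sum_{s=a+1}^{b-1}\eta(s)B(s)=0$ for all $\eta$ in the hyperplane $\sum_{s=a+1}^{b-1}(b-s)^{\overline{-\alpha}}\eta(s)=0$ one can only conclude $B(s)=\lambda (b-s)^{\overline{-\alpha}}$ for some constant $\lambda$, not $B(s)=0$. In the alternative case $L_2(b)=0$ the variations are unrestricted and your indicator argument is perfectly valid. To be fair, the paper's own proof simply asserts that the assumption and Proposition \ref{properties} 6) imply (\ref{E2}) without addressing this point either; but since you made the admissibility of $e_t$ explicit, that claim needs to be repaired (e.g.\ by a multiplier argument or by restating the prescribed-boundary case) rather than asserted.
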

\begin{proof}
We proceed as in Theorem \ref{m1}, except when $\nabla_a^{-(1-\alpha)}f(b-1)$ is preassigned the function $\eta$ is taken from $H=\{y:\mathbb{N}_a \cap ~_{b}\mathbb{N}\rightarrow \mathbb{R}~\texttt{is bounded},~\nabla_a^{-(1-\alpha)}y(b-1)=0\}$. Then, $$\delta J(\eta,f)=\sum_{t=a+1}^{b-1}[\eta(t) L_1(t)+ \nabla_a^\alpha \eta(t) L_2^\sigma(t-1)]=0,$$ for every $\eta \in H$. Then, the integration by part in Theorem \ref{Riemann2 by parts}  implies that
$$\delta J(\eta,f)=\sum_{t=a+1}^{b-1}\eta(t)[ L_1(t)+ ~_{b}^{C}\nabla^\alpha  L_2^\sigma(t-1)]+ L_2^\sigma(t) \nabla_a ^{-(1-\alpha)}\eta(t)|_a^{b-1}=0,$$ for every $\eta \in H$. Finally, the assumption and Proposition \ref{properties} 6) implies (\ref{E2}) and the proof is finished.
\end{proof}

Similar to what applied in Theorem \ref{m1}, we can generate boundary conditions  at $a$ as well in Theorem \ref{mm} above, if we consider $\nabla_{a-1}^\alpha$ instead of $\nabla_{a}^\alpha$ in the Lagrangian $L$.

Finally, we obtain the Euler-Lagrange equations for a Lagrangian including the Caputo left fractional difference by making use of the integration by parts formula in Theorem \ref{Caputo by parts}.

\begin{thm} \label{mmm}
Let $0< \alpha <1$ be non-integer, $a,b \in \mathbb{R}$, and  $f$ are defined on $\mathbb{N}_a \cap ~_{b}\mathbb{N}$, where $a\equiv b ~(mod ~1)$. Assume that the functional
 $$J(f)=\sum_{t=a+1}^{b-1} L(t,f^\rho(t),~^{C}\nabla_a^\alpha f(t) )$$
 has a local extremum in $S=\{y:\mathbb{N}_a \cap ~_{b}\mathbb{N}\rightarrow \mathbb{R}~\texttt{is bounded}\}$ at some $f \in S$, where
 $L:(\mathbb{N}_a \cap ~_{b}\mathbb{N})\times \mathbb{R}\times \mathbb{R}\rightarrow \mathbb{R}$. Further, assume either $f(a)=A$ and $f(b-1)=B$ or the natural boundary conditions $~_{b}\nabla^{-(1-\alpha)}L_2 (a)=~_{b}\nabla^{-(1-\alpha)}L_2 (b-1)=0$. Then,
\begin{equation}\label{E3}
[L_1^\sigma(s) + (~_b\nabla^\alpha L_2)(s)] =0,~\texttt{for all}~ s \in (\mathbb{N}_{a+1} \cap ~_{b-2}\mathbb{N}).
\end{equation}

\end{thm}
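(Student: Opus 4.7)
My plan is to mirror the variational template already used for Theorems \ref{m1} and \ref{mm}: form the admissible variation $\widehat f = f + \epsilon\eta$, Taylor-expand $L$ to first order in $\epsilon$, read off the first variation $\delta J$, apply an integration-by-parts identity to detach $\eta$ from every fractional operator, and conclude via a pointwise variational lemma. What is distinctive here is that the Lagrangian carries $f^\rho$ and the Caputo difference ${}^{C}\nabla_a^\alpha f$, so Theorem \ref{Caputo by parts} is the natural tool: it trades ${}^{C}\nabla_a^\alpha \eta$ for the right Riemann difference ${}_{b}\nabla^\alpha L_2$ at the cost of a boundary term involving ${}_{b}\nabla^{-(1-\alpha)}L_2$, which is precisely the object appearing in the natural boundary hypothesis.

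Concretely, I would first isolate
\[
\delta J(\eta,f) = \sum_{t=a+1}^{b-1}\bigl[\eta^\rho(t)\,L_1(t) + L_2(t)\,{}^{C}\nabla_a^\alpha \eta(t)\bigr],
\]
insist $\delta J(\eta,f)=0$ for every admissible $\eta$, and take the admissible class to be $H=\{y\in S:y(a)=y(b-1)=0\}$ under the fixed-endpoint hypothesis and to be all of $S$ under the natural-boundary hypothesis. Applying Theorem \ref{Caputo by parts} with the identifications $g\mapsto L_2$, $f\mapsto \eta$ converts the Caputo contribution into
\[
\eta(s)\,{}_{b}\nabla^{-(1-\alpha)}L_2(s)\Big|_{a}^{b-1} + \sum_{t=a+1}^{b-1}\eta(t-1)\,({}_{b}\nabla^\alpha L_2)(t-1).
\]

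Re-indexing by $s=t-1$ in both pointwise sums, and using ${}_{b}\nabla^{-(1-\alpha)}L_2(b-1)=L_2(b-1)$, the first variation collapses to
\[
\delta J = \sum_{s=a}^{b-2}\eta(s)\bigl[L_1^\sigma(s) + ({}_{b}\nabla^\alpha L_2)(s)\bigr] + \eta(b-1)L_2(b-1) - \eta(a)\,{}_{b}\nabla^{-(1-\alpha)}L_2(a).
\]
In either hypothesis the residual boundary piece vanishes: directly from $\eta(a)=\eta(b-1)=0$ when the endpoints are fixed, and from the imposed natural conditions on ${}_{b}\nabla^{-(1-\alpha)}L_2$ otherwise. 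Inserting the unit-spike variations $e_s$ (which lie in either admissible class since $s\in\mathbb{N}_{a+1}\cap{}_{b-2}\mathbb{N}$ forces $e_s(a)=e_s(b-1)=0$) then isolates each coefficient and delivers \eqref{E3}.

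The main technical obstacle I anticipate is bookkeeping the two index shifts: the $\rho$ that sits on $f$ in the Lagrangian supplies one translation, and Theorem \ref{Caputo by parts} supplies another through the factor $\eta(s-1)$ on its right-hand side. These must conspire so that, after a single reindexing, the coefficients of $\eta(s)$ unite into exactly $L_1^\sigma(s) + ({}_{b}\nabla^\alpha L_2)(s)$ and the surviving interior range pins down to $\{a+1,\dots,b-2\}$, one point shorter on each side than in Theorem \ref{m1}. Verifying this compatibility, rather than any deeper analytic subtlety, is the real content of the argument.
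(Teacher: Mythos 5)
Your proposal is correct and follows essentially the same route as the paper: form the first variation $\sum_{t=a+1}^{b-1}[\eta(t-1)L_1(t)+L_2(t)\,{}^{C}\nabla_a^\alpha\eta(t)]$, apply Theorem \ref{Caputo by parts} with $g\mapsto L_2$ and $f\mapsto\eta$, kill the boundary term $\eta(t)\,{}_{b}\nabla^{-(1-\alpha)}L_2(t)\big|_a^{b-1}$ by either hypothesis, and extract the Euler--Lagrange equation with spike variations. Your write-up merely makes explicit the reindexing $s=t-1$ and the resulting range $\mathbb{N}_{a+1}\cap{}_{b-2}\mathbb{N}$, which the paper leaves implicit.
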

\begin{proof}
If $f$ is preassigned at $a$ and $b-1$ then the function $\eta$ is taken from $H=\{y:\mathbb{N}_a \cap ~_{b}\mathbb{N}\rightarrow \mathbb{R}~\texttt{is bounded},~y(a)=y(b-1)=0\}$. Then, we proceed to reach $$\delta J(\eta,f)=\sum_{t=a+1}^{b-1}[\eta(t-1) L_1(t)+ ~^{C}\nabla_a^\alpha \eta(t) L_2(t)]=0,$$ for every $\eta \in H$. The integration by parts formula in Theorem \ref{Caputo by parts} then implies that
$$\delta J(\eta,f)=\sum_{t=a+1}^{b-1}\eta(t-1)[ L_1(t)+ (~_{b}\nabla^\alpha  L_2)(t-1)]+\eta(t) ~_{b}\nabla^{-(1-\alpha)}L_2(t)|_a^{b-1}=0,$$ for every $\eta \in H$. Hence, (\ref{E3}) follows.
\end{proof}

We finish this section by remarking that we can obtain a delta analogue of the discussed nabla discrete variational problems in this section by making use of the dual identities studied in \cite{Th DDNS, Th ADE}.
\section{Example}
In order to exemplify our results we analyze an example of physical interest under Theorem \ref{mm} and Theorem \ref{mmm}. Namely, let us consider the following fractional discrete actions,
\begin{itemize}
  \item 1) $J(y)=\sum_{t=a+1}^{b-1}[\frac{1}{2}(\nabla_a^\alpha y(t))^2-V(y(t))],$ where $0<\alpha <1$. Assume either $\nabla_a^{-(1-\alpha)}f (b-1)=A$ or  $\nabla_a^\alpha y(b)=0$. Then  the Euler-Lagrange equation by applying Theorem \ref{mm} is
       $$~^{C}_{b+1}\nabla^\alpha \nabla_a^\alpha y(s)-\frac{dV}{dy}(s)=0~\texttt{for all}~ s \in (\mathbb{N}_{a+1} \cap ~_{b-1}\mathbb{N}).$$
  \item 2)$J(y)=\sum_{t=a+1}^{b-1}[\frac{1}{2}(~^{C}\nabla_a^\alpha y(t))^2-V(y(\rho(t)))],$ where $0<\alpha <1$.  Assume either $y(a)=A$ and $y(b-1)=B$ or the natural boundary conditions $~_{b}\nabla^{-(1-\alpha)}~^{C}\nabla_a^\alpha (a)=~_{b}\nabla^{-(1-\alpha)} ~^{C}\nabla_a^\alpha y(b-1)=0$. Then  the Euler-Lagrange equation by applying Theorem \ref{mmm} is
       $$ (~_b\nabla^\alpha ~^{C}\nabla_a^\alpha y)(s)-\frac{dV}{d y^\rho}(\sigma(s))] =0,~\texttt{for all}~ s \in (\mathbb{N}_{a+1} \cap ~_{b-2}\mathbb{N}).$$
Finally, we remark that it is of interest  to deal with the above Euler- Lagrange equations obtained in the above example, where we have composition of left and right fractional differences. In the usual fractional case for such left-right fractional dynamical systems we mention the work done in \cite{Thabet}.
\end{itemize}
\section{Acknowledgments}
The author would like to thank Prince Salman Research and  Translation Center in Prince Sultan University for the financial support.

\end{document}